\newtheorem{theorem}{Theorem}[section]
\newtheorem{lemma}[theorem]{Lemma}
\theoremstyle{definition}
\newtheorem{remark}[theorem]{Remark}
\renewcommand{\d}{{\rm d}}
\renewcommand{\u}{u_{\rm rad}^\delta}
\newcommand{\uiso}{u_{\rm rad}^0}
\newcommand{\C}{\mathbb{C}}
\newcommand{\E}{\mathfrak{E}}
\renewcommand{\H}{\mathcal{H}}
\renewcommand{\L}{\mathfrak{L}_\delta}
\newcommand{\Q}{\mathfrak{Q}_{\rm rad}^\delta}
\newcommand{\Qiso}{\mathfrak{Q}_{\rm rad}^0}
\newcommand{\R}{\mathbb{R}}
\renewcommand\Re[1]{\operatorname{Re}\left\{#1\right\}}
\renewcommand\Im[1]{\operatorname{Im}\left\{#1\right\}}
\newcommand{\e}{\varepsilon}
\def\avgint{\,\ThisStyle{\ensurestackMath{%
			\stackinset{c}{.2\LMpt}{c}{.5\LMpt}{\SavedStyle-}{\SavedStyle\phantom{\int}}}%
		\setbox0=\hbox{$\SavedStyle\int\,$}\kern-\wd0}\int}
\newcommand{\intP}[1]{\int_{\R^2}#1\,\d x}
\newcommand{\peta}{\partial_{\eta}}
\newcommand{\peeta}{\partial_{\eta\eta}}
\newcommand{\abs}[1]{{\left\vert #1\right\vert}}
\begin{document}
\title{On the stability of radial solutions to an anisotropic Ginzburg-Landau equation}
\date{\today}

\author{Xavier Lamy}
\address{Institut de Math\'ematiques de Toulouse, UMR 5219, Universit\'e de Toulouse, CNRS, UPS IMT, F-31062 Toulouse Cedex 9, France.}
\email{xavier.lamy@math.univ-toulouse.fr}

\author{Andres Zuniga}
\address{Instituto de Ciencias de la Ingenier\'ia (ICI), Universidad de O'Higgins (UOH), Rancagua, Chile.
} 
\email{andres.zuniga@uoh.cl}

\begin{abstract}
We study the linear stability of entire radial solutions $u(re^{i\theta})=f(r)e^{i\theta}$, with positive 
increasing 
profile $f(r)$, to the anisotropic Ginzburg-Landau equation
\begin{align*}
-\Delta u -\delta (\partial_x+i\partial_y)^2\bar u =(1-|u|^2)u,\quad -1<\delta <1,
\end{align*}
which arises in various liquid crystal models. In the isotropic case $\delta=0$, Mironescu showed that such solution is nondegenerately stable. We prove stability of this radial solution in the range $\delta\in (\delta_1,0]$ for some $-1<\delta_1<0$, and instability outside this range. In strong contrast with the isotropic case, stability with respect to higher Fourier modes is \emph{not}  a direct consequence of stability with respect to lower Fourier modes. In particular, in the case where $\delta\approx -1$, lower modes are stable and yet higher modes are unstable.
\end{abstract}	

\maketitle

\section{Introduction}

Given $\delta\in (-1,1)$ and $u:\R^2\to\C$, we consider the anisotropic energy
\begin{align}\label{eq:E}
	\E[u]=\intP{\frac{1}{2}|\nabla u|^2+\frac{\delta}2\Re{(\peta\bar u)^2}+\frac{1}{4}(1-|u|^2)^2},\qquad \text{ where }\;\partial_\eta=\partial_x+i\partial_y.
\end{align}
Minimizers and stable critical points of $\E$ are relevant in describing 2D point defects (or 3D straight-line defects) in some liquid crystal configurations (e.g. smectic-$C^*$ thin films \cite{phillips13} and nematics close to the Fr\'eedericksz transition \cite{clerc14}). This energy can also be viewed as a toy model to understand intricate phenomena triggered by elastic anisotropy in the more complex Landau-de Gennes energy  \cite{kitavtsev16}.

\begin{remark}\label{r:ksb}
The anisotropic term $\Re{(\peta\bar u)^2}$ can be rewritten as
\begin{align*}
\Re{(\peta\bar u)^2}=(\nabla\cdot u)^2 - (\nabla\times u)^2,
\end{align*}
so that, in view of the identity $\abs{\nabla u}^2=(\nabla\cdot u)^2+(\nabla\times u)^2 -2\det(\nabla u)$, energy~\eqref{eq:E} differs from 
\begin{align*}
\widetilde\E[u]
=\int \frac{k_s}{2}(\nabla\cdot u)^2 + \frac{k_b}{2}(\nabla\times u)^2 +\frac{1}{4}(1-\abs{u}^2)^2,\qquad k_s=1+\delta,\; k_b=1-\delta,
\end{align*}
only by the integral of the null Lagrangian $\det(\nabla u)$. This is precisely the form that appears in \cite{phillips13} where minimizers of 
\begin{align}\label{eq:Etildeeps}
	\widetilde\E_\e[u]
	=\int_{\Omega} \frac{k_s}{2}(\nabla\cdot u)^2 + \frac{k_b}{2}(\nabla\times u)^2 +\frac{1}{4\e^2}(1-\abs{u}^2)^2
\end{align}
are investigated in the limit as $\e\to 0^+$ in a bounded planar domain $\Omega$.
\end{remark}
Critical points of $\E$ are solutions of the Euler-Lagrange equation 
\begin{equation}\label{eq:GL}
\begin{aligned}
	 \L u &= (|u|^2-1)u\qquad\text{in }\R^2\\
	\L u &:=\Delta u+\delta\,\peeta\bar u.
\end{aligned}
\end{equation}
We are interested in symmetric solutions of the form
\begin{align}\label{eq:symansatz}
	&u(re^{i\theta})=f(r)e^{i\alpha}e^{i\theta} \qquad\text{for some }\alpha\in\R,
\end{align}
with a radial profile $f(r)$ satisfying
\begin{align}\label{eq:condf}
	&f(0)=0,\quad \lim_{r\to+\infty} f(r)=1,\quad \abs{f(r)}>0\quad\forall r\in (0,\infty).
\end{align}
Formally, one can always look for solutions of \eqref{eq:GL} in the form \eqref{eq:symansatz} (as a consequence of the $O(2)$-invariance of $\E$), and $f$ must solve
\begin{align*}
	Tf +\delta e^{-2i\alpha}T\bar f =\left(\abs{f}^2-1\right) f,\qquad T=\frac{d^2}{dr^2} +\frac 1r \frac{d}{dr} -\frac 1{r^2}.
\end{align*}
At this point we see a fundamental difference with respect to the isotropic case $\delta=0$. If $\delta=0$, one can find solutions as above for a real-valued function $f$, which moreover does not depend on $\alpha$. 
In the anisotropic case $\delta\neq 0$, as remarked in  \cite{clerc14}, the function $f$ can be  real-valued only if 
 $\alpha\equiv 0$ modulo $\pi/2$. In that case, the existence and uniqueness of a solution satisfying \eqref{eq:condf} follows from the case $\delta=0$ (see~\cite{herve94,chen94}).
Otherwise, the function $f$ must be complex valued. 

\begin{remark}
Another difference with respect to the isotropic case is that for $\delta\neq 0$ the Ansatz $u(re^{i\theta})=f(r)e^{im\theta}$ cannot provide a solution when the winding number $m$ is $\neq 1$. 
\end{remark}

In \cite{clerc14}, the core energies of the two symmetric solutions corresponding to $\alpha =0,\pi/ 2$ are compared, to find that the lowest energy corresponds to $\alpha=0$ for $\delta<0$ and  $\alpha=\pi/2$ for $\delta>0$. In view of Remark~\ref{r:ksb} this is consistent with the fact that $\nabla\times e^{i\theta}=0$, while $\nabla\cdot ie^{i\theta}=0$; indeed, for $\delta<0$ the energy $\widetilde\E[u]$ in Remark~\ref{r:ksb} penalizes more strongly the term $(\nabla\times u)^2$ than the term $(\nabla\cdot u)^2$, since in this case $k_b=1-\delta >k_s=1+\delta$. In \cite[Proposition~3.1]{phillips13} the authors use this to show that minimizers of \eqref{eq:Etildeeps} behave like $e^{i\alpha}e^{i\theta}$ around point defects, with $\alpha\equiv 0$ (resp. $\pi/2$) modulo $\pi$ if $\delta<0$ (resp. $\delta>0$).  
These results tell us, for $\delta\neq 0$, which one is the minimizing behavior at infinity. 

Here, in contrast, we fix the far-field behavior and investigate the  local  stability of radial solutions with respect to compactly supported perturbations.
For the isotropic case $\delta=0$, this study has been performed in \cite{mironescu95stab} (see also \cite{delpino04}), and the radial solution is stable. In the anisotropic situation $\delta\neq 0$ we find that the corresponding symmetric solution stays stable for negative $\delta$ close to zero and it loses stability for $\delta$ either positive or close to minus one (see Theorem~\ref{t:main} for precise statements). 

It can be readily seen that the case $\alpha=\pi/2$ corresponds to $\alpha=0$, after changing the sign of $\delta$. Accordingly, we only treat the case where $\alpha=0$. That is,
we investigate the linear stability of solutions $u$ of the form
\begin{align}\label{eq:urad}
	\u(r,\theta)=f(r)e^{i\theta},\quad f\colon (0,+\infty)\to (0,+\infty)\quad\text{ with }\quad f(0)=0,\;\;\lim_{r\to+\infty}f(r)=1.
\end{align}
Let us note that the equation satisfied by $\u$,~\eqref{eq:GL}, reduces to the following ODE for $f$
\begin{align}\label{eq:falpha0}
	(1+\delta) Tf =(f^2-1)f,\qquad T=\frac{d^2}{dr^2} +\frac 1r \frac{d}{dr} -\frac 1{r^2}.
\end{align}
As pointed out in \cite{clerc14}, the rescaling of the variable by $(1+\delta)^{\frac 12}$ simplifies~\eqref{eq:falpha0} to the standard ODE corresponding to the isotropic case $\delta=0$. Whence, existence and uniqueness of $f$ follow from~\cite{herve94,chen94}. Moreover, it is known that $f$ takes values in $(0,1)$ and is strictly increasing.

The second variation of the energy $\E$ around $\u$ is the quadratic form
\begin{align}
	\Q[v]
	&=\intP{|\nabla v|^2+\delta\Re{(\peta\bar v)^2}-(1-|\u|^2)|v|^2+2\,(\u\cdot v)^2}\notag\\
	&=\intP{|\nabla v|^2+\delta\Re{(\peta\bar v)^2}-(1-f^2)|v|^2+2f^2\,(e^{i\theta}\cdot v)^2}\label{eq:Qrad}
\end{align}
associated to the linear operator obtained by linearizing~\eqref{eq:GL} around $\u$:
\[
	\mathcal L(\u)[v]=-\L v-(1-|\u|^2)v+2\,(\u\cdot v)\,\u,
\]
where $u\cdot v:=\Re{u\bar v}$ denotes the standard inner product of complex-valued functions.

Taking into account the asymptotic expansion $f(r)=1+O(r^{-2})$ as $r\to\infty$ (see~\cite{herve94,chen94}), it follows that the energy space of $\Q$ naturally corresponds to
\begin{align*}
	\H:=\left\lbrace v\in H^1_{loc}(\mathbb R^2)\colon \intP{\abs{\nabla v}^2+\frac{1}{r^2}\abs{v}^2+ (e^{i\theta}\cdot v)^2} <+\infty \right\rbrace.
\end{align*}
Also, the translational invariance of $\E$ readily provides two elements of $\H$ at which $\Q$ vanishes, namely
\begin{align*}
\partial_x \u &= e^{i\theta}\left( f' \cos\theta - i\frac fr \sin\theta \right),&
 \partial_y \u &=e^{i\theta} \left(f'\sin\theta +i\frac fr\cos\theta\right),
\end{align*}
and the linear space they generate is denoted by
\begin{align*}
K_0=\mathrm{span}\lbrace \partial_x \u,\partial_y \u\rbrace.
\end{align*}
Our main result shows that the symmetric solution $\u$ is stable when $\delta\leq 0$ is small, and unstable otherwise:
\begin{theorem}\label{t:main}
Let $\u$ denote the radial solution~\eqref{eq:urad} of the anisotopric Ginzburg-Landau equation~\eqref{eq:GL}, and let $\Q$ denote the quadratic form~\eqref{eq:Qrad} associated to the energy $\E$ around $\u$. Then, there exists a unique number $\delta_1\in(-1,0)$ such that
\begin{itemize}
\item for every $\delta\in (\delta_1,0]$, $\u$ is nondegenerately stable: namely,
\begin{align*}
	\Q[v]>0\qquad\text{for all } v\in H\setminus K_0,
\end{align*}
\item for every $\delta\in (-1,\delta_1)\cup (0,1)$, $\u$ is linearly unstable: namely,
\begin{align*}
\Q[v]<0\qquad\text{for some }v\in H.
\end{align*}
\end{itemize}
\end{theorem}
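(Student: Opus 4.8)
The plan is to decompose the quadratic form $\Q$ into Fourier modes in $\theta$ and analyze each mode separately. Writing a perturbation $v\in\H$ as $v = e^{i\theta}\sum_{n\in\Z} w_n(r)e^{in\theta}$, the $O(2)$-symmetry of $\u$ (modulo the anisotropy) ensures that $\Q$ essentially diagonalizes, but because $\Re{(\peta\bar v)^2}$ is not rotation invariant, the anisotropic term couples the mode $n$ with the modes $n\pm 2$ (after writing $e^{i\theta}\cdot v$ and $\peta\bar v$ in polar coordinates). A cleaner route: expand $v = \sum_n c_n(r) e^{in\theta}$ directly; then $\Q[v]$ splits into a sum over the residue classes of $n$ modulo $2$, and within each class the anisotropic term links neighboring modes. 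The key algebraic step is to compute, for a single real Fourier pair $v=(a(r)e^{in\theta}+b(r)e^{-in\theta})$ together with its partner modes, the reduced one-dimensional quadratic form $\Q_n[a,b]$; here I expect the $\delta$-term to contribute an off-diagonal coupling of the form $\int \delta\,(\text{cross terms in }a,b,a',b')$. The radial solution's profile $f$ and its derivative $f'$, together with the known asymptotics $f=1+O(r^{-2})$, $f'=O(r^{-3})$, control the zeroth-order potential.

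Next I would isolate the ``dangerous'' modes. For $\delta\le 0$ small, I expect each reduced form $\Q_n$ to be nonnegative, with $\Q_0$ and $\Q_{\pm 1}$ (the $\theta$-independent and the translation modes) being the borderline cases; the kernel $K_0$ lives in the mode-$0$/mode-$2$ sector and is detected there. This is where Mironescu's isotropic analysis enters as the $\delta=0$ baseline: one knows $\Q^{\delta=0}\ge 0$ with kernel exactly $K_0$, so for $\delta$ near $0$ a perturbation argument (continuity of the bottom of the spectrum of each reduced operator, uniform in $n$) gives stability on $\H\setminus K_0$. The nondegeneracy — strict positivity away from $K_0$ — requires ruling out that the eigenvalue crossing zero at $\delta=0$ persists or that a new zero mode appears; one shows the relevant eigenvalue derivative in $\delta$ is nonzero, or more robustly that $\Q^{\delta=0}[v]>0$ controls $\Q^\delta[v]$ for $v\perp K_0$ via a spectral gap. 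The uniformity in $n$ is important: for $|n|$ large the term $n^2/r^2$ dominates and positivity is automatic, so only finitely many modes need care.

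For instability, I would exhibit an explicit destabilizing test function in a single dangerous mode. For $\delta>0$: here the anisotropy favors $\alpha=\pi/2$, so perturbing $\u=fe^{i\theta}$ toward $ife^{i\theta}$ — i.e. a mode-$0$ (in the $e^{i\theta}w(r)$ variable, constant-in-$\theta$) imaginary perturbation $v=i g(r)e^{i\theta}$ with $g$ compactly supported and close to $f$ — should make $\Q[v]<0$, since it lowers the anisotropic energy while the other terms are controlled; this parallels the core-energy comparison of \cite{clerc14}. For $\delta$ near $-1$: the claim is that a \emph{high} Fourier mode destabilizes. The mechanism is that the coefficient $1+\delta\to 0$ degenerates the principal part of $\Q$ in the direction where $\peta\bar v$ is large, namely the anti-holomorphic directions $e^{-in\theta}$; choosing $v = h(r)e^{-in\theta}$ for suitable $n\ge 2$, the leading term becomes $\int |\nabla v|^2 + \delta\,|\peta\bar v|^2 \approx \int (1+\delta)(\dots) + \text{(lower order, possibly negative)}$, and one arranges the bracket to be negative for $n$ large once $\delta$ is close enough to $-1$. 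The main obstacle is exactly this last point: making the high-mode instability quantitative — identifying which $n$ works, getting the scaling in $r$ right (the competition is between $n^2/r^2$, the $(1+\delta)$-degeneracy, and the confining potential $2f^2(e^{i\theta}\cdot v)^2$), and confirming that the test function lies in $\H$. Finally, the \emph{uniqueness} of $\delta_1$ requires a monotonicity statement: the infimum of $\Q$ over a suitably normalized class is monotone in $\delta$ (or each reduced eigenvalue is), so stability holds on exactly an interval $(\delta_1,0]$; I would prove this by differentiating the reduced quadratic forms in $\delta$ and using that the derivative has a sign on the relevant eigenfunctions.
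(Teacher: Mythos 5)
Your overall architecture (Fourier decomposition, isotropic baseline, explicit destabilizers, monotonicity in $\delta$) matches the paper's, but several of your concrete steps would fail. First, the block structure of the decomposition is not what you describe: writing $v=e^{i\theta}\sum_n w_n e^{in\theta}$, the anisotropic term $\Re{(\peta\bar v)^2}$ pairs the mode $n$ exactly with the mode $-n$ (the $\theta$-integral of $(\peta\bar v)^2$ selects $m=-n$), so the form splits into genuine $2\times 2$ blocks $Q_n^\delta[\varphi,\psi]$ — there is no chain of couplings within a parity class. Second, and more seriously, your claim that \textquotedblleft for $|n|$ large the term $n^2/r^2$ dominates and positivity is automatic, so only finitely many modes need care\textquotedblright{} is precisely the trap this problem sets: the off-diagonal $\delta$-term contributes $(n-1)\cdot 2\delta\int r^{-1}(\varphi\psi'-\varphi'\psi)\,r\,dr$, which grows in $n$ and contains \emph{derivatives}, so it cannot be absorbed by $n^2 r^{-2}|\varphi|^2$ alone; one must spend part of $\int(|\varphi'|^2+|\psi'|^2)$, which does not grow with $n$. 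The paper's central finding is that this absorption works only for $\delta\in[-1/\sqrt5,0]$ (after an adapted Hardy-type substitution $\varphi=f_0'\zeta-r^{-1}f_0\eta$, $\psi=f_0'\zeta+r^{-1}f_0\eta$), and that for $\delta$ near $-1$ the high modes are in fact \emph{unstable} while all low modes remain stable — so the uniform-in-$n$ perturbation argument you sketch cannot be carried out as stated for the whole stability range.

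Your two instability constructions also have gaps. For $\delta>0$, perturbing toward $ife^{i\theta}$ forces a cutoff (since $\int f^2 r^{-1}dr$ diverges), and the cutoff cost $\int f_0^2(\chi')^2 r\,dr$ and the anisotropic gain $-2\delta\int(1-f_0^2)f_0^2\chi^2 r\,dr\sim -2\delta\int\chi^2 r^{-1}dr$ scale \emph{identically}; negativity is not automatic but follows only from the failure of Hardy's inequality in two dimensions, via a logarithmically stretched test function — \textquotedblleft the other terms are controlled\textquotedblright{} is exactly where the work is. For $\delta\approx-1$, a single anti-holomorphic mode $v=h(r)e^{-in\theta}$ does not work: its reduced form is $\int[|h'|^2+(1-n)^2r^{-2}|h|^2+(1+\delta)(2f_0^2-1)|h|^2]r\,dr$, which converges to a manifestly positive form as $\delta\to-1$. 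The instability requires both components of the pair $(\varphi,\psi)$ to be active; the paper takes $\eta=\zeta$ in the adapted variables and exploits a far-field potential whose leading term $\frac{(1+\delta)(n+1)}{r^2}-\frac{4(1-\delta)}{r^4}$ becomes negative at $\delta=-1$ and is multiplied by $(n-1)$, while the positive gradient terms are not. Finally, for the uniqueness of $\delta_1$ the clean mechanism is that $\delta\mapsto\mathcal Q^\delta[v]$ is \emph{affine} for each fixed $v$ and nonnegative at $\delta=0$; hence any $v$ destabilizing some $\delta'<0$ destabilizes all $\delta<\delta'$, and nondegeneracy on $(\delta_1,0]$ follows from positivity at $\delta=0$ off $K_0$. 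Your proposal to show that \textquotedblleft the eigenvalue derivative in $\delta$ has a sign\textquotedblright{} is not obviously true without this affine structure anchored at $\delta=0$.
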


\begin{remark}
The most relevant range from the stand point of physics is $\delta\in (-1,0]$ since for $\delta>0$ the far-field behavior corresponding to $\alpha=0$ is non-minimizing,  and this translates here into instability of the radial solution.
\end{remark}

\begin{remark}
In the stability range $\delta\in (\delta_1,0]$, a contradiction argument as in \cite[Lemma~3.1]{delpino04} provides a coercivity estimate of the form 
\begin{align*}
	\Q[v]\geq C(\delta) \intP{|\nabla v|^2}\qquad
	\forall v\in K_0^\perp \colon \int_{\mathbb S^1} (ie^{i\theta})\cdot v(re^{i\theta})\, d\theta=0\;\;\forall r>0,
\end{align*}
where $\perp$ denotes orthogonality in $\H$.
Using this coercivity for $\delta=0$, one can deduce stability for small negative $\delta$ via a relatively simple perturbation argument,  combined with properties of the lower modes in \S~\ref{s:low}. Instead, we will give a more quantitative proof,  which provides an explicit range for stability: we deduce that $\delta_1 \leq -1/\sqrt 5$.
\end{remark}

Our proof of Theorem~\ref{t:main} follows the general strategy of \cite{mironescu95stab}: we decompose $v$ into Fourier modes
\begin{align*}
v =e^{i\theta}\sum_{n\in\mathbb Z}w_n(r)e^{in\theta}.
\end{align*}
 and we are led to studying the sign of $\Q$, separately, for each mode
\begin{align*}
e^{i\theta}\left(w_n(r)e^{in\theta} + w_{-n}(r)e^{-in\theta}\right).
\end{align*}
As in~\cite{mironescu95stab}, the lower modes $n=0$ and $n=1$ play a special role. They can be studied via an appropriate decomposition already used in \cite{mironescu95stab} (see also \cite{delpino04}). For any $\delta\in(-1,0]$ we find that these lower modes are stable, while for $\delta >0$ the mode corresponding to $n=0$ is unstable. 

A major difference of the present work compared to~\cite{mironescu95stab} (or similar results in \cite{ignat15stabhedgehog,ignat15instab2d,ignat16stab2d}) pertains to the higher modes $n\geq 2$. In contrast with the cited works, stability for the higher modes is not an obvious consequence of stability for the lower modes. More precisely in the isotropic case we have 
\begin{align*}
\Qiso\left[ e^{i\theta}\left(w_+(r)e^{in\theta} + w_{-}(r)e^{-in\theta}\right)\right] \geq 
\Qiso \left[ e^{i\theta}\left(w_+(r)e^{i\theta} + w_{-}(r)e^{-i\theta}\right)\right]\quad\forall n\geq 1,
\end{align*}
but for $\delta\neq 0$ this is not valid anymore, see \eqref{eq:QnQ1}. This feature is new and specific to the anisotropic case $\delta\neq 0$. Our strategy to study the sign of these higher modes is based on the same decomposition used for $n=1$, and a careful balance of the contributions of additional terms, which end up causing instability for $\delta\approx -1$. 
\medskip

The article is organized as follows. In Section~\ref{s:split} we recall the splitting property of the quadratic form $\Q$ with respect to Fourier expansion. In Section~\ref{s:low} we study the stability of lower modes, and in Section~\ref{s:high} the instability of higher modes.
In Section~\ref{s:proof} we give the proof of Theorem~\ref{t:main}. In addition, we included Appendix~\ref{a:stabA} to recall the details of the decomposition used to study the lower modes, adapted to our notations.

\medskip

\textbf{Acknowledgements}
\medskip

XL is partially supported by ANR project ANR-18-CE40-0023 and COOPINTER project IEA-297303. AZ is supported by ANID Chile under the grant FONDECYT de Iniciaci\'on en Investigaci\'on $N^{\circ}$ 11201259.

\section{Fourier splitting}\label{s:split}

Recall that $f (r) = f_0 ((1 + \delta)^{-\frac 12} r)$ where $f_0$ is the classical Ginzburg-Landau vortex profile
corresponding to the case $\delta = 0$. That is, the unique solution of
\begin{align}\label{eq:f0}
	f_0'' +\frac 1r f_0' -\frac{1}{r^2}f_0 =-(1-f_0^2)f_0,\quad f_0>0\text{ on }(0,+\infty),\quad f_0(0)=0,\;\; \lim_{r\to +\infty}f_0(r)=1.
\end{align}
We rescale variables and consider $\mathcal Q^\delta[v] = \Q[\tilde v]$ where
$\tilde v(\tilde x) = v((1 +\delta)^{-\frac 12}\tilde x)$, so that
\begin{align}\label{eq:Q}
\mathcal Q^\delta[v] 
&=\intP{|\nabla v|^2+\delta\Re{(\peta\bar v)^2}
+(1+\delta)\left\lbrace 2f_0^2\,(e^{i\theta}\cdot v)^2-(1-f_0^2)|v|^2\right\rbrace},
\end{align}
which corresponds to the second variation of the appropriately rescaled energy around $\uiso$.
Following \cite{mironescu95stab} we decompose $v$ using Fourier series, as 
\begin{align}\label{eq:Fourier}
v=e^{i\theta} w =e^{i\theta}\sum_{n\in\mathbb Z}w_n(r)e^{in\theta},
\end{align}
where we have conveniently shifted the index $n-1\mapsto n$.

This decomposition provides a \enquote{diagonalization} of the linearized operator:

\begin{lemma}\label{l:decomp}
The quadratic form~\eqref{eq:Q} splits as
\begin{align*}
\mathcal Q^\delta[v] = \mathcal Q^\delta\left[w_0(r)e^{i\theta}\right] + \sum_{n\geq 1} \mathcal Q^\delta\left[e^{i\theta}\left(w_n(r)e^{in\theta} +w_{-n}(r)e^{-in\theta}\right)\right].
\end{align*}
\end{lemma}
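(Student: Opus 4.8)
The plan is to expand $\mathcal Q^\delta[v]$ term by term using the Fourier decomposition $v = e^{i\theta}w = e^{i\theta}\sum_{n\in\mathbb Z} w_n(r)e^{in\theta}$, and to show that the cross terms between distinct frequency blocks $\{n,-n\}$ vanish upon integration in $\theta$. First I would record how the anisotropic operator $\partial_\eta = \partial_x + i\partial_y$ acts in polar coordinates, namely $\partial_\eta = e^{i\theta}(\partial_r + \tfrac ir\partial_\theta)$, so that for $v = e^{i(n+1)\theta}g(r)$ one has $\partial_\eta\bar v = e^{-i n\theta}(g' + \tfrac{n}{r}g)$ (after conjugation the angular exponential becomes $e^{-i(n+1)\theta}$ and multiplication by $e^{i\theta}$ gives $e^{-in\theta}$). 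The key observation is that, for a single mode $e^{i\theta}w_n e^{in\theta}$, each of the three ingredients $|\nabla v|^2$, $(e^{i\theta}\cdot v)^2$, and $|v|^2$ produces angular dependence only through $e^{\pm i k\theta}$ with $k\in\{0, \pm 2(n+1)\}$ — more precisely, $|\nabla v|^2$ and $|v|^2$ are $\theta$-independent for a pure mode, while $(e^{i\theta}\cdot v)^2 = (\Re\{w_n e^{in\theta}\})^2$ and $\Re\{(\partial_\eta\bar v)^2\}$ carry the $e^{\pm 2i n\theta}$ harmonics.

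Next I would examine the cross term arising from two modes $e^{i\theta}w_n e^{in\theta}$ and $e^{i\theta}w_m e^{im\theta}$ with $m\neq \pm n$. In $|\nabla v|^2$ and $|v|^2$ the cross term carries a factor $e^{i(n-m)\theta}$, which integrates to zero over $\mathbb S^1$ since $n\neq m$. In the two quadratic-in-real-part terms, writing $(e^{i\theta}\cdot v) = \tfrac12(w_n e^{in\theta} + \overline{w_n}e^{-in\theta}) + (m\text{-term})$ and squaring, the genuinely new cross contributions involve $e^{\pm i(n+m)\theta}$ and $e^{\pm i(n-m)\theta}$; these vanish after $\theta$-integration unless $m = \pm n$. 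Hence only the diagonal blocks $\{n,-n\}$ survive, which is exactly the claimed splitting, with the $n=0$ block contributing $\mathcal Q^\delta[w_0 e^{i\theta}]$ and each $n\geq 1$ contributing $\mathcal Q^\delta[e^{i\theta}(w_n e^{in\theta} + w_{-n}e^{-in\theta})]$. I should also check that the sum over modes can be interchanged with the integral — a routine density/monotone-convergence argument using that $\mathcal Q^\delta$, after completing the square in the potential terms as in \eqref{eq:Q}, is bounded by a finite sum of manifestly convergent quadratic forms on $\H$.

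The main obstacle, such as it is, will be bookkeeping the anisotropic term $\delta\,\Re\{(\partial_\eta\bar v)^2\}$: one must be careful that the squaring of a sum of modes $\partial_\eta\bar v = \sum_n e^{-in\theta}(w_n' + \tfrac nr w_n)$ produces cross terms $e^{-i(n+m)\theta}$, so that the block $\{n,-n\}$ is precisely the set of pairs whose indices sum to zero — this is why the splitting naturally pairs $n$ with $-n$ rather than leaving modes fully decoupled. Verifying that no $\{n,-n\}$ cross term is spuriously discarded (equivalently, that $\mathcal Q^\delta[e^{i\theta}(w_n e^{in\theta}+w_{-n}e^{-in\theta})]$ genuinely contains the interaction between $w_n$ and $w_{-n}$) is the one place where the anisotropy changes the structure relative to the isotropic case, and it is worth stating explicitly. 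Everything else is elementary orthogonality of $\{e^{ik\theta}\}_{k\in\mathbb Z}$ on $\mathbb S^1$.
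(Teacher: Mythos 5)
Your plan follows the paper's proof essentially verbatim: expand $v=e^{i\theta}\sum_n w_n(r)e^{in\theta}$, invoke orthogonality of $\lbrace e^{ik\theta}\rbrace$ in $L^2(\mathbb S^1)$ for each of the three constituents of $\mathcal Q^\delta$, and observe that the anisotropic term $\Re{(\partial_\eta\bar v)^2}$ produces cross harmonics $e^{-i(n+m)\theta}$ and hence couples exactly the pairs with $n+m=0$, which is why the blocks are $\lbrace n,-n\rbrace$. One small slip that does not affect the orthogonality argument (but would matter for the explicit formula~\eqref{eq:Qndelta}): with the shifted indexing $v=\sum_n w_n e^{i(n+1)\theta}$ the correct radial factor in $\partial_\eta\bar v$ is $\bar w_n'+\frac{n+1}{r}\bar w_n$, not $w_n'+\frac{n}{r}w_n$.
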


\begin{proof}[Proof of Lemma~\ref{l:decomp}.]
Lemma~\ref{l:decomp} essentially asserts that the family of functions
\begin{align}\label{eq:fouriermodes}
w_0(r)e^{i\theta},\quad \{e^{i\theta}\left(w_n(r) e^{in\theta} + w_{-n}(r) e^{-in\theta}\right):\;n\geq 1\},
\end{align}
is orthogonal for the quadratic form $\mathcal Q$. This quadratic form \eqref{eq:Q} is composed of three terms. For the first term, 
\begin{align*}
	\intP{|\nabla v|^2},
\end{align*}
the orthogonality of \eqref{eq:fouriermodes} is a standard fact (recall e.g. in \cite{mironescu95stab}). For the third term,
\begin{align*}
	\intP{\left\lbrace f_0^2\,(e^{i\theta}\cdot v)^2-(1-f_0^2)|v|^2\right\rbrace},
\end{align*}
the orthogonality of \eqref{eq:fouriermodes} is proved in \cite{mironescu95stab}. The novelty here, with respect to \cite{mironescu95stab}, concerns the anisotropic term
\begin{align*}
\intP{\Re{(\peta\bar v)^2}}.
\end{align*}
The orthogonality of \eqref{eq:fouriermodes} for this anisotropic term, as a matter of fact, follows from the calculations in \cite[\S~3.2]{phillips17}. As our notations are different, we sketch a proof here for the reader's convenience.

We compute
\begin{align*}
\partial_\eta\bar v &=e^{i\theta}\partial_r \bar v +\frac{ie^{i\theta}}{r}\partial_\theta \bar v 
=\sum_{n\in\mathbb Z} \left(\bar w_n' +\frac{1+n}{r}\bar w_n\right) e^{-in\theta},
\end{align*}
and deduce, using the orthogonality of $\lbrace e^{in\theta}\rbrace$ in $L^2(\mathbb S^1)$,
\begin{align*}
&\avgint_{\mathbb S^1}\Re{(\partial_\eta\bar v)^2}\, d\theta \\
&= \Re{ 
\sum_{n,m\in\mathbb Z}
\left(\bar w_n' +\frac{1+n}{r}\bar w_n\right)\left(\bar w_m' +\frac{1+m}{r}\bar w_m\right) \avgint_{\mathbb S^1} e^{-i(n+m)\theta} d\theta
} \\
& =\Re{
\sum_{n\in\mathbb Z}
\left(\bar w_n' +\frac{1+n}{r}\bar w_n\right)\left(\bar w_{-n}' +\frac{1-n}{r}\bar w_{-n}\right)
} \\
&=\sum_{n\in\mathbb Z} \Re{
 \left(\bar w_n' +\frac{1+n}{r}\bar w_n\right)\left(\bar w_{-n}' +\frac{1-n}{r}\bar w_{-n}\right) 
}.
\end{align*}
This implies the announced orthogonality and completes the proof of Lemma~\ref{l:decomp}.
\end{proof}

According to the decomposition of Lemma~\eqref{l:decomp}, we define the quadratic forms
\begin{align*}
Q_0^\delta[\varphi]&=\frac 1{2\pi} \mathcal Q^\delta\left[\varphi(r)e^{i\theta}\right] & \text{for }\varphi\in \H_0,\\
Q_n^\delta[\varphi,\psi]&=\frac 1{2\pi}\mathcal Q^\delta\left[e^{i\theta}\left(\varphi(r)e^{in\theta} +\psi(r)e^{-in\theta} \right)\right] &\text{for }(\varphi,\psi)\in \H_1,
\end{align*}
where $\H_0$ and $\H_1$ are the natural spaces corresponding to the conditions $\varphi(r)e^{i\theta}\in\H$ and $e^{i\theta}\left(\varphi(r)e^{in\theta} +\psi(r)e^{-in\theta}\right) \in\H$ for $n\geq 1$, respectively.
\begin{align*}
	\H_0
	&=\left\lbrace \varphi\in H^1_{loc}(0,\infty)\colon \int_0^{+\infty}\left(\abs{\varphi'}^2+\frac{\abs{\varphi}^2}{r^2} +\Re{\varphi}^2\right) r\, dr <+\infty\right\rbrace,\\
	\H_1
	&=\left\lbrace (\varphi,\psi) \in (H^1_{loc}(0,\infty))^2\colon \int_0^{+\infty}\left(\abs{\varphi'}^2+\abs{\psi'}^2+\frac{\abs{\varphi}^2+\abs{\psi}^2}{r^2} +\abs{\varphi+\bar\psi}^2\right) r\, dr <+\infty\right\rbrace
\end{align*}
\begin{remark}\label{r:densitytest}
Using the density of smooth functions in $H^1_{loc}$ and cut-off functions $\chi_\e$ such that $\mathbf 1_{2\e<r<\e^{-1}}\leq \chi_\e(r) \leq \mathbf 1_{\e<r<2\e^{-1}}$ and $|\chi_\e'(r)|\leq C/r$, we see that smooth test functions with compact support in $(0,\infty)$ are dense in $\H_0$ and $\H_1$. Hence, in the sequel, we will always be able to perform calculations assuming, without loss of generality, that $\varphi$ and $\psi$ are such test functions.
\end{remark}

The quadratic forms $Q_0^\delta$ and $Q_n^\delta$ are explicitly given by
\begin{align}
\label{eq:Q0delta}
Q_0^\delta[\varphi]
&=
\int_0^\infty \Bigg[
\abs{\varphi'}^2+\frac{1}{r^2}\abs{\varphi}^2 
+ \delta\Re{ \left(\bar\varphi'+\frac 1r\bar\varphi\right)^2} \\
&\hspace{7em}
+(1+\delta)\left\lbrace 2 f_0^2 (\Re{\varphi})^2 - (1-f_0^2)\abs{\varphi}^2\right\rbrace
\Bigg]\, rdr ,\nonumber\\
\label{eq:Qndelta}
Q_n^\delta[\varphi,\psi ]
&=\int_0^\infty \Bigg[
\abs{\varphi'}^2+\abs{\psi'}^2+\frac{(1+n)^2}{r^2}\abs{\varphi}^2 +\frac{(1-n)^2}{r^2}\abs{\psi}^2 \\
&\hspace{5em}
+ 2\delta\Re{ \left(\bar\varphi'+\frac {1+n}r\bar\varphi\right)\left(\bar\psi'+\frac {1-n}r\bar\psi\right)} \nonumber\\
&\hspace{7em}
+(1+\delta)\left\lbrace f_0^2 \abs{\varphi +\bar\psi}^2 - (1-f_0^2)\left(\abs{\varphi}^2+\abs{\psi}^2\right)\right\rbrace
\Bigg]\, rdr .\nonumber
\end{align}

\begin{remark}\label{r:splitrealim}
For every $n\geq 1$ there is a further splitting, namely
\begin{align*}
Q_n^\delta[\varphi,\psi]&=Q_n^\delta\left[\Re{\varphi},\Re{\psi}\right] + Q_n^\delta\left[\Im{\varphi},-\Im{\psi}\right].
\end{align*}
Consequently, it will be sufficient to consider real-valued test functions $\varphi,\psi$.
\end{remark}

\section{Study of  the lower modes $Q_0^\delta$ and $Q_1^\delta$}\label{s:low}

We show that $Q_0^\delta$ is positive for $\delta\leq 0$, but it can become negative for $\delta>0$. In addition, we prove that $Q_1^\delta$ is nonnegative for all $\delta\in (-1,0]$.

\subsection{Positivity of $Q_0^\delta$ for $\delta\in (-1,0]$}

Let us recall from~\eqref{eq:Q0delta} that $Q_0^\delta$ is given by
\begin{align*}
Q_0^\delta[\varphi]
&=
\int_0^\infty \Bigg[
\abs{\varphi'}^2+\frac{1}{r^2}\abs{\varphi}^2 
+ \delta\Re{ \left(\bar\varphi'+\frac 1r\bar\varphi\right)^2} \\
&\hspace{7em}
+(1+\delta)\left\lbrace 2 f_0^2 (\Re{\varphi})^2 - (1-f_0^2)\abs{\varphi}^2\right\rbrace
\Bigg]\, rdr
\end{align*}
We now introduce the  quadratic form
\begin{align*}
A_0[\varphi]&:=Q_0^0[\varphi]\\
&= \int_0^\infty \Bigg[
\abs{\varphi'}^2+\frac{1}{r^2}\abs{\varphi}^2 
+2 f_0^2 (\Re{\varphi})^2 - (1-f_0^2)\abs{\varphi}^2
\Bigg]\, rdr.
\end{align*}
It is known that $A_0[\varphi]>0$, unless $\varphi=0$ (see Appendix~\ref{a:stabA} for more details). 
Moreover, we have the identity
\begin{align*}
Q_0^\delta[\varphi]&=(1+\delta) A_0[\Re{\varphi}]+(1-\delta)A_0[i\Im{\varphi}]
-2\delta\int (1-f_0^2) (\Im{\varphi})^2\, r\, dr
\\
&\quad
+\delta \int_0^\infty \frac{d}{dr}\left[(\Re{\varphi})^2-(\Im{\varphi})^2\right]\, dr\\
&=(1+\delta) A_0[\Re{\varphi}]+(1-\delta)A_0[i\Im{\varphi}]
-2\delta\int (1-f_0^2) (\Im{\varphi})^2\, r\, dr,
\end{align*}
which is valid for any $\varphi\in C_c^\infty(0,\infty)$, hence for $\varphi\in\H_0$ thanks to Remark~\ref{r:densitytest}.
Since $1-f_0^2 \geq 0$, we deduce the positivity of $Q_0^\delta$ for every $\delta\in (-1,0]$.

\subsection{Instability for $\delta>0$}

Using the formula \eqref{eq:A0dec} obtained for $A_0$ in Appendix~\ref{a:stabA}, we see that for any compactly supported real-valued test function $\chi$ we have
\begin{align*}
Q_0^\delta[if_0\chi]&=(1-\delta)\int f_0^2 (\chi')^2 \, r\, dr -2\delta \int (1-f_0^2)f_0^2\chi^2\, r\, dr.
\end{align*}
Applying this to $\chi_n(r)=\chi_1(r/n)$, for a fixed test function $\chi_1$, and using the asymptotic expansion  
\cite{herve94,chen94}:
\begin{align*}
f_0(r)=1-\frac{1}{2r^2} +O(r^{-4})\qquad\text{ as }r\to\infty,
\end{align*}
we see that
\begin{align*}
\lim_{n\to\infty} Q_0^\delta[if_0\chi_n]=(1-\delta)\int (\chi_1')^2 \, r\, dr - 2\delta \int \frac{\chi_1^2}{r^2}\, r\, dr.
\end{align*}
When $\delta>0$, this expression must be negative for some $\chi_1$, since Hardy's inequality is known to fail in two dimensions. Explicitly, by choosing
\begin{align*}
\chi_1(r)=\sin(\sqrt\lambda \ln r)\mathbf 1_{(1,e^{\pi/\sqrt\lambda})}(r)\qquad\text{for }\lambda=\frac{\delta}{1-\delta}>0,
\end{align*}
we have that $\chi_1\in H^1(0,\infty)$ is compactly supported, and
\begin{align*}
\lim_{n\to\infty} Q_0^\delta[if_0\chi_n]=-\delta \int \frac{\chi_1^2}{r^2}\, r\, dr <0.
\end{align*}
Whence, for $\delta>0$, the mode of order 0 already brings instability. This comes as no surprise as this mode corresponds to infinitesimal rotations (see Appendix~\ref{a:stabA}), and we know that the far-field behavior $e^{i\theta}$ is unstable: rotating this far-field behavior decreases the energy.

\subsection{Positivity of $Q_1^\delta$ for $\delta\leq 0$}\label{ss:Q1deltaneg}

Recall, according to~\eqref{eq:Qndelta}, that $Q_1^\delta$ is given by
\begin{align*}
Q_1^\delta[\varphi,\psi ]
&=\int_0^\infty \Bigg[
\abs{\varphi'}^2+\abs{\psi'}^2+\frac{4}{r^2}\abs{\varphi}^2  \\
&\hspace{5em}
+ 2\delta\Re{ \left(\bar\varphi'+\frac {2}r\bar\varphi\right) \bar\psi' } \\
&\hspace{7em}
+(1+\delta)\left\lbrace f_0^2 \abs{\varphi +\bar\psi}^2 - (1-f_0^2)\left(\abs{\varphi}^2+\abs{\psi}^2\right)\right\rbrace
\Bigg]\, rdr .
\end{align*}
We introduce the quadratic form $A_1:=Q_1^0$, namely
\begin{align*}
A_1[\varphi,\psi]& =
\int_0^\infty \Bigg[
\abs{\varphi'}^2+\abs{\psi'}^2+\frac{4}{r^2}\abs{\varphi}^2\\
&\hspace{5em}
+ f_0^2 \abs{\varphi +\bar\psi}^2 - (1-f_0^2)\left(\abs{\varphi}^2+\abs{\psi}^2\right)
\Bigg]\, rdr.
\end{align*}
It is a known fact that $A_1$ is nonnegative on $\H_1$, and vanishes exactly at pairs $(\varphi,\psi)$ corresponding to maps $v$ which are linear combinations of $\partial_x\uiso$ and $\partial_y\uiso$ (see Appendix~\ref{a:stabA} for more details). Moreover, we have
\begin{align}\label{eq:Q1deltaA1}
Q_1^\delta[\varphi,\psi]-(1+\delta)A_1[\varphi,\psi]&=
-\delta\int_0^\infty \Bigg[
\abs{\varphi'}^2+\abs{\psi'}^2+\frac{4}{r^2}\abs{\varphi}^2  \Bigg]\, rdr \\
&\quad + 2\delta \int_0^\infty\Re{ \left(\bar\varphi'+\frac {2}r\bar\varphi\right) \bar\psi' } \, rdr\nonumber
\\
&
=-\delta\int_0^\infty
\abs{\varphi'+\frac 2r \varphi -\bar\psi'}^2\, rdr -2\delta\int_0^\infty \frac{d}{dr}\left[|\varphi|^2\right]\, dr \nonumber \\
& = -\delta\int_0^\infty
\abs{\varphi'+\frac 2r \varphi -\bar\psi'}^2\, rdr,\nonumber
\end{align}
for $(\varphi,\psi)\in (C_c^\infty(0,\infty))^2$, hence for all $(\varphi,\psi)\in\H_1$.
From this identity we infer that  $Q_1^{\delta}\geq 0$ for every $\delta\in(-1, 0]$, and equality can only occur when $v$ is a linear combination of $\partial_x\uiso$ and $\partial_y\uiso$.

\section{Study of the higher modes $Q_n^\delta$ for $n\geq 2$}\label{s:high}

\subsection{Positivity of $Q_n^\delta$ for $n\geq 2$ and $\delta\in [-1/\sqrt 5,0]$}

Let us recall: in the isotropic case, the positivity of $Q_n^\delta$ (any $n\geq 2$) is a consequence of the fact that $Q_n^0\geq Q_1^0$. Here, from the definition~\eqref{eq:Qndelta} of $Q_n^\delta$,  we have
\begin{align}\label{eq:QnQ1}
&Q_n^\delta[\varphi,\psi]-Q_1^\delta[\varphi,\psi]\\
&=(n-1)\int_0^\infty 
\Bigg[
\frac{n+3}{r^2}\abs{\varphi}^2 +\frac{n-1}{r^2}\abs{\psi}^2 
-2\delta \frac{n+1}{r^2} \Re{\bar\varphi\bar\psi} \nonumber\\
&\hspace{17em} +2\frac{\delta}{r} \Re{\bar\varphi\bar\psi'-\bar\varphi'\bar\psi}
\Bigg]\, rdr.\nonumber
\end{align}
Unlike what happens in the isotropic case, this does not obviously have a sign (because of the last term which contains derivatives).

It seems reasonable to use a decomposition for $\varphi,\psi$ adapted to $Q_1^\delta$, as in Appendix~A. Accordingly, we define for any real-valued test functions $\zeta,\eta$, the adapted quadratic form
\begin{align*}
B_n^\delta[\zeta,\eta]&=\frac 12 Q_n^\delta\left[f_0'\zeta - r^{-1}f_0\eta,f_0'\zeta +r^{-1}f_0\eta \right]
\end{align*} 
Decomposing 
\[
	Q_n^\delta=(1+\delta)A_1 +Q_1^\delta -(1+\delta)A_1 + Q_n^\delta-Q_1^\delta
\]
and using the above expressions of $Q_n^\delta-Q_1^\delta$ \eqref{eq:QnQ1} and $Q_1^\delta-(1+\delta)A_1$ \eqref{eq:Q1deltaA1}, we have, for real-valued $(\varphi,\psi)\in \H_1$:
\begin{align*}
Q_n^\delta[\varphi,\psi]&=(1+\delta)A_1[\varphi,\psi] \\
&\quad
-\delta
\int_0^\infty
\left(\varphi'+\frac 2r \varphi -\psi'\right)^2\, rdr  \\
&\quad  +(n-1)\int_0^\infty 
\left[
\frac{n+3}{r^2} \varphi^2 +\frac{n-1}{r^2} \psi^2 
-2\delta \frac{n+1}{r^2}  \varphi \psi \right] \\
&\quad  +2\delta (n-1) \int_0^\infty \frac{1}{r} \left(\varphi \psi'-\varphi'\psi\right)
\, rdr.
\end{align*}
When plugging in $\varphi= f_0'\zeta - r^{-1}f_0\eta$, $\psi = f_0'\zeta +r^{-1}f_0\eta$, the first term significantly simplifies thanks to the formula \eqref{eq:A1dec} for $A_1$  in Appendix~\ref{a:stabA}. For the other terms we directly expand 
\begin{align*}
&\varphi'+\frac 2r \varphi -\psi'
=2f_0'\frac{\zeta-\eta}{r} -2 \frac{f_0}{r}\eta', \\
&\frac{n+3}{r^2} \varphi^2 +\frac{n-1}{r^2} \psi^2 
-2\delta \frac{n+1}{r^2}  \varphi \psi \\
&=2(1-\delta)\frac{n+1}{r^2}(f_0'\zeta)^2 +2(1+\delta)\frac{n+1}{r^2}\left(\frac{f_0}{r}\eta\right)^2
-\frac{8}{r^2}f_0'\zeta\frac{f_0}{r}\eta  \\
&\varphi \psi'-\varphi'\psi  =2 \left(\frac{f_0}{r}\eta\right)'f_0'\zeta -2 (f_0'\zeta)'\frac{f_0 }{r}\eta,
\end{align*}
from which it follows that
$B_n^\delta[\zeta,\eta]=(1/2) Q_n^\delta [f_0'\zeta - r^{-1}f_0\eta,f_0'\zeta +r^{-1}f_0\eta  ]$ can be rewritten as
\begin{align}\label{eq:Bndelta}
B_n^\delta[\zeta,\eta] 
&=(1+\delta)\int_0^\infty \left[
\frac{f_0^2}{r^2}(\eta')^2 +(f_0')^2(\zeta')^2+\frac 2 {r^3} f_0f_0'(\eta - \zeta)^2
\right]\, rdr \\
&\quad -2\delta\int_0^\infty \left[\frac{f_0'}{r}\left(\eta-\zeta\right) +\frac{f_0}{r}\eta'\right]^2
\, rdr \nonumber\\
&\quad + (n-1)\int_0^\infty\left[
(1-\delta)\frac{n+1}{r^2}(f_0'\zeta)^2 +(1+\delta)\frac{n+1}{r^2}\left(\frac {f_0}r\eta\right)^2
-\frac{4}{r^2}\left(f_0'\zeta\right)\left(\frac {f_0}r \eta\right)
\right]\, rdr \nonumber\\
&\quad + 2\delta(n-1)\int_0^\infty \frac 1r \left[\left(\frac {f_0}r \eta\right)' f_0'\zeta-\left(f_0'\zeta\right)'\frac {f_0}r \eta  \right]\, rdr. \nonumber
\end{align}
%
%
Integrating by parts, the last integral becomes 
\begin{align*}
\int_0^\infty \frac 1r \left[\left(\frac {f_0}r \eta\right)'f_0'\zeta-\left(f_0'\zeta\right)'\frac {f_0}r \eta  \right]\, rdr & = 2\int_0^\infty  \left(\frac{f_0}{r}\eta\right)'f_0'\frac\zeta r \, rdr \\
&=2\int_0^\infty \left[\left(f_0'-\frac{f_0}{r}\right)f_0'\frac\eta r\frac\zeta r +\frac{f_0}{r}\eta'f_0'\frac\zeta r \right]\, rdr.
\end{align*}
We use the  first positive term in \eqref{eq:Bndelta} in order to absorb this latter term: thanks to the identity
\begin{align*}
(1+\delta)\frac{f_0^2}{r^2}(\eta')^2+4\delta(n-1)\frac{f_0}{r}\eta'f_0'\frac\zeta r 
& =(1+\delta)\left(\frac{f_0}{r}\eta' +\frac{2\delta}{1+\delta}(n-1)f_0'\frac{\zeta}{r}\right)^2 \\
&\quad -4\frac{\delta^2}{1+\delta}(n-1)^2 \left(f_0'\right)^2\left(\frac\zeta r\right)^2,
\end{align*}
we rewrite \eqref{eq:Bndelta} as
\begin{align*}
B_n^\delta[\zeta,\eta]
&=B^{\delta,1}_n[\zeta,\eta] + (n-1)B^{\delta,2}_n[\zeta,\eta],\\
B^{\delta,1}_n[\zeta,\eta]&=(1+\delta)\int_0^\infty \left[
\left(\frac{f_0}{r}\eta' +\frac{2\delta}{1+\delta}(n-1)f_0'\frac{\zeta}{r}\right)^2
 +
(f_0')^2(\zeta')^2\right]
\, rdr \\
&\quad +2\int_0^\infty  \left\lbrace(1+\delta) f_0'\frac{f_0}{r}\frac{(\eta-\zeta)^2}{r^2}-\delta \left[\frac{f_0'}{r}\left(\eta-\zeta\right) +\frac{f_0}{r}\eta'\right]^2
\right\rbrace\, rdr  ,\\
B^{\delta,2}_n[\zeta,\eta]&=\int_0^\infty q^\delta_n(r)\left[f_0'\frac\zeta r,\frac{f_0}{r}\frac\eta r\right] \, rdr,
\end{align*}
and $q^\delta_n(r)$ is the  quadratic form on $\R^2$ given by 
\begin{align*}
q^\delta_n(r)[X,Y]&=a_n  X^2 +b_n Y^2 +2 c(r) XY,\\
a_n&= (1-\delta)(n+1)-4\frac{\delta^2}{1+\delta}(n-1)\\
b_n &
= (1+\delta)(n+1) \\
c(r)&=-2 -2\delta \left(1- r\frac{f_0'}{f_0}\right)
\end{align*}
We readily see that $B^{\delta,1}_n$ is nonnegative for $\delta\leq 0$. Moreover, since $1>rf_0'/f_0>0$  \cite[Proposition~2.2]{ignat14ode}, for $\delta\leq 0$, it follows that
\begin{align*}
\abs{c(r)}\leq 2.
\end{align*}
As $b_n>0$, a sufficient condition for $q_n^\delta(r)$ to be positive definite for all $r>0$ is
\begin{align*}
 4< a_nb_n =(1-\delta^2)(n+1)^2 -4\delta^2(n^2-1).
\end{align*}
This amounts to the condition
\[
0< \alpha(\delta)n^2+ \beta(\delta) n +\gamma(\delta),
\]
where
\begin{align*}
\alpha(\delta)&=1-5\delta^2,\\
 \beta(\delta)&=2(1-\delta^2),\\
\gamma(\delta)&=-3(1-\delta^2).
\end{align*}
For $\delta\in [-1/\sqrt{5},0]$ we have $\alpha(\delta),\beta(\delta)\geq 0$ so that the above polynomial in $n$ is nondecreasing on $[0,+\infty)$. Hence, it is positive for all values of $n\geq 2$ if and only if it is positive for $n=2$. That is,
\begin{align*}
0&< 4\alpha(\delta) +2\beta(\delta)+\gamma(\delta)=5- 21 \,\delta^2.
\end{align*}
We deduce that  $q_n^\delta$ is a positive definite quadratic form for all $n\geq 2$ whenever $\delta\in [-1/\sqrt{5},0]$.
 In particular, $B_n^{\delta,2} \geq 0$ and therefore $Q_n^\delta \geq 0$ for $\delta\in [-1/\sqrt{5},0]$, with equality  only at $(0,0)$.

\subsection{Instability for $\delta\approx - 1$}

In this section we show that $Q_n^\delta$ can take negative values for $\delta\approx -1$ and $n\geq 1$ large enough. To this end, we choose $\eta=\zeta$ in \eqref{eq:Bndelta}, to obtain
\begin{align*}
\hat B^\delta_n[\zeta]&= B^\delta_n[\zeta,\zeta] \\
&=(1-\delta)\int_0^\infty \frac{f_0^2}{r^2}(\zeta')^2\, rdr +(1+\delta)\int_0^\infty (f_0')^2(\zeta')^2 \, rdr + (n-1)\int_0^\infty\frac{\zeta^2}{r^2} \alpha_n^\delta(r) \, rdr\\
\alpha^\delta_n(r)&=
(1-\delta)(n+1)(f_0')^2 +(1+\delta)(n+1)\left(\frac {f_0}r\right)^2 
-2(2+\delta)f_0'\frac {f_0}r +2\delta (f_0')^2 -2\delta f_0f_0''.
\end{align*}
Using the asymptotics of $f_0$ (\!\!\cite{chen94,herve94})
\begin{align*}
f_0(r)=1-\frac 12 r^{-2}  +O(r^{-4}),\quad
f_0'(r)=r^{-3} +  O(r^{-5}),\quad
f_0''(r)=-3 r^{-4} + O(r^{-6}),
\end{align*}
 we find, for $r\to+\infty$,
\begin{align*}
\alpha^\delta_n(r)= \frac{(1+\delta)(n+1)}{r^2}\left(1-\frac{1}{r^2}\right) -4\frac{1-\delta }{r^4}  +O(r^{-6}).
\end{align*}
For $\delta=-1$ the leading order is negative.
Hence, there exists $\e>0$ and a compact interval $[r_0,r_0+1]$  on which $\alpha_n^{-1}\leq -2\varepsilon$. Thus, we deduce that for all $n\geq 2$ there exists $\delta_n >-1$ such that for all $\delta\in (-1,\delta_n]$,
\begin{align*}
-\e &\geq \alpha^\delta_n(r),\qquad\forall r\in [r_0,r_0+1].
\end{align*}
Choosing a nonzero test function $\zeta_0$ with support in $[r_0,r_0+1]$, we obtain
\begin{align*}
\hat B^\delta_n[\zeta_0]&\leq C_1(\zeta_0)-(n-1)\e C_2(\zeta_0)\qquad\forall \delta\in (-1,\delta_n],
\end{align*}
for some $C_1(\zeta_0),C_2(\zeta_0)>0$.
If $n$ is large enough this becomes negative. Compared to the isotropic case this is a really new situation: lower modes are positive but higher modes can bring instability. 

\section{Proof of Theorem~\ref{t:main}}\label{s:proof}

In what precedes we have shown that $\u$ is nondegenerately stable for small $\delta\leq 0$, and unstable for $\delta>0$ and $\delta$ close to $- 1$. In particular, setting
\begin{align*}
\delta_1=\sup \lbrace \delta\in (-1,0)\colon \u \text{ is unstable }\rbrace,
\end{align*}
we know that $-1<\delta_1<0$. It remains to show that $\u$ is unstable for all $\delta\in (-1,\delta_1)$, and nondegenerately stable for $\delta\in (\delta_1,0]$.

Let $\delta'\in (-1,\delta_1)$ be such that $\u$ is unstable, that is, $\mathcal Q^{\delta'}[v]<0$ for some choice of $v\in H$. Given that $\delta\mapsto \mathcal Q^\delta[v]$ is an affine function which is nonnegative for $\delta=0$ and negative for $\delta=\delta'$, we deduce that $\mathcal Q^\delta[v]<0$ for all $\delta\leq \delta'$. Therefore, $\u$ is unstable for all $\delta\in (-1,\delta')$. By arbitrariness of $\delta'$ we deduce that $\u$ is unstable for all $\delta\in (-1,\delta_1)$. 

Let us now fix $\delta\in (\delta_1,0]$.
By definition of $\delta_1$, $\u$ is not unstable for all $\delta\in (\delta_1,0]$. In other words, $\mathcal Q^\delta[v]$ is nonnegative for all $v\in \H$. It remains to show that, in fact, $\mathcal Q^\delta[v]>0$ for all $v\in \H\setminus \mathrm{span}(\partial_x\uiso,\partial_y \uiso)$. 
We observe that the function $\delta\mapsto \mathcal Q^\delta[v]$ is affine for any given $v\in \H\setminus \mathrm{span}(\partial_x \uiso,\partial_y\uiso)$; it is positive for $\delta=0$ because $\uiso$ is nondegenerately stable, and it is nonnegative for $\delta\in (\delta_1,0)$. Thus, it must be strictly positive for $\delta\in (\delta_1,0)$. 
This proves the desired nondegenerate stability in the announced range.


\appendix

\section{Positivity of $A_0,A_1$}\label{a:stabA}

We sketch here the approach in \cite{mironescu95stab}, adapted to our notation (see also \cite{delpino04}), based on Hardy-type decompositions to show positivity of the two following quadratic forms
\begin{align*}
A_0[\varphi]&= \int_0^\infty \Bigg[
\abs{\varphi'}^2+\frac{1}{r^2}\abs{\varphi}^2 \\
&\hspace{5em}
+2 f_0^2 (\Re{\varphi})^2 - (1-f_0^2)\abs{\varphi}^2
\Bigg]\, rdr,\nonumber\\
A_1[\varphi,\psi]& = 
\int_0^\infty \Bigg[
\abs{\varphi'}^2+\abs{\psi'}^2+\frac{4}{r^2}\abs{\varphi}^2
\\
&\hspace{5em}
+ f_0^2 \abs{\varphi +\bar\psi}^2 - (1-f_0^2)\left(\abs{\varphi}^2+\abs{\psi}^2\right)
\Bigg]\, rdr.\nonumber
\end{align*}

Testing equation \eqref{eq:f0}, solved by $f_0$, against $f_0\abs{\tilde\varphi}^2$ for any smooth compactly supported $\tilde\varphi\in C^{\infty}_{c}(\R;\C)$,  one obtains
\begin{align*}
\int_0^\infty \left[ 
(f_0')^2|\tilde\varphi|^2+2f_0f_0'\tilde\varphi\cdot\tilde\varphi' +\frac{f_0^2}{r^2}|\tilde\varphi|^2-(1-f_0^2)f_0^2 |\tilde\varphi|^2
\right]\, r dr=0,
\end{align*}
so that
\begin{align}\label{eq:A0dec}
A_0[f_0\tilde\varphi]&=\int_0^\infty \left[f_0^2\abs{\tilde\varphi'}^2 +2f_0^4(\Re{\tilde \varphi})^2\right] \,rdr.
\end{align}
By density of test functions, and since $f_0>0$, we deduce that $A_0[\varphi]>0$ for any non-zero $\varphi\in \H_0$. Moreover $A_0[\varphi]\approx 0$ exactly when $\varphi\approx if_0$. This corresponds to the fact that in the isotropic case $\delta=0$, 
\begin{align*}
\partial_\alpha[e^{i\alpha}\u]_{\lfloor\alpha=0}=if_0e^{i\theta}
\end{align*}
solves the linearized equation due to rotational invariance.
 
For $A_1$, it is convenient to start by splitting it as
\begin{align*}
A_1[\varphi,\psi]&=A_1\left[\Re{\varphi},\Re{\psi}\right]+ A_1\left[\Im{\varphi},
-\Im{\psi}\right],
\end{align*}
so we may just treat the case of real-valued test functions $\varphi,\psi$. 
Guided by the fact that
\begin{align*}
\partial_x \uiso = e^{i\theta}( f_0' \cos\theta - i\frac {f_0}r \sin\theta )
,\qquad
 \partial_y \uiso =e^{i\theta} (f_0'\sin\theta +i\frac {f_0}r\cos\theta),
 \end{align*}
solve the linearized equation around $\uiso$, one  uses the ansatz
\begin{align*}
\varphi=f_0'\zeta - \frac{f_0}{r}\eta,\qquad\psi=f_0'\zeta + \frac{f_0}{r}\eta,
\end{align*}
for some real-valued $\eta,\zeta\in C_c^\infty(0,\infty)$.
Testing equation \eqref{eq:f0}, solved by $f_0$, against $f_0r^{-2}\eta^2$  we obtain
\begin{align*}
\int_0^\infty\left[
\left(\left(\frac{f_0}{r}\right)'\right)^2\eta^2 
+ 2 \left(\frac{f_0}{r}\right)'\frac{f_0}{r}\eta\eta' 
+\frac{2}{r^4}f_0^2\eta^2
-\frac{2}{r^3}f_0f_0'\eta^2 
-(1-f_0^2)\frac{f_0^2}{r^2}\eta^2
\right]\,rdr =0,
\end{align*}
and similarly testing \eqref{eq:f0} against $(f_0'\zeta^2)'$ we find
\begin{align*}
\int_0^\infty\left[
(f_0'')^2\zeta^2
+2 f_0'f_0''\zeta\zeta'
+ \frac{2}{r^2}(f_0')^2\zeta^2 
-\frac{2}{r^3}f_0f_0'\zeta^2
+(3f_0^2-1)(f_0')^2\zeta^2
\right]\, rdr =0.
\end{align*}
As a consequence of these two identities, we learn
\begin{align}\label{eq:A1dec}
& A_1 \left[f_0'\zeta - r^{-1}f_0\eta,f_0'\zeta + r^{-1}f_0\eta\right] \\
&=2\int_0^\infty \left[
\frac{f_0^2}{r^2}(\eta')^2 +(f_0')^2(\zeta')^2+\frac 2 {r^3} f_0f_0'(\eta - \zeta)^2
\right]\, rdr.\nonumber
\end{align}
Since $f_0,f_0'>0$ one may consider the choice
\begin{align*}
&\zeta=\frac{1}{2f_0'}(\varphi+\psi),\quad\eta=\frac{r}{2f_0}(\psi-\varphi),
\end{align*}
and deduce from the above that $A_1[\varphi,\psi]>0$ for all non-zero  $(\varphi,\psi)\in \H_1$. Moreover $A_1[\varphi,\psi]= 0$ exactly when $(\varphi,\psi)$ is in the real linear span of
\begin{align*}
\left(f_0'-\frac {f_0}r,f_0'+\frac {f_0}r\right),\quad 
\left(i\left(f_0'-\frac {f_0}r\right),-i\left(f_0'+\frac {f_0}r\right)\right),
\end{align*} 
which corresponds to the fact that $\partial_x\uiso$ and $\partial_y\uiso$
solve the linearized equation.
\bigskip
\bigskip

\bibliographystyle{acm}
\bibliography{ref}

\end{document}